\newtheorem{theorem}{Theorem}[section]
\newtheorem{lemma}[theorem]{Lemma}
\newtheorem{proposition}[theorem]{Proposition}
\theoremstyle{definition}
\newtheorem{remark}[theorem]{Remark}
\newtheorem{example}[theorem]{Example}
\begin{document}

\author{Jacek
Jakubowski and
 Maciej Wi\'sniewolski }

\title[Verhulst process]{Exact distribution of Verhulst process}

\maketitle

\begin{center}
{\small
 Institute of Mathematics, University
of Warsaw \\
  Banacha 2, 02-097 Warszawa, Poland \\
e-mail: {\tt jakub@mimuw.edu.pl } \\
and \\
  {\tt wisniewolski@mimuw.edu.pl } }
\end{center}

\begin{abstract}
We investigate a Verhulst process, which is the special functional of geometric Brownian motion and   has many applications, among others in biology and  in stochastic volatility models. We present
an exact form of density of a one dimensional distribution of Verhulst process. Simple formula for the density of Verhulst process is obtained in the special case, when the drift of geometric Brownian motion is equal to $-\frac12$. Some special properties of this process are discussed, e.g. it turns out that  under Girsanov's change of measure  a Verhulst process still remains a  Verhulst process but with different parameters.
\end{abstract}

\noindent
\begin{quote}
\noindent  \textbf{Key words}: geometric Brownian motion,
Verhulst process, Girsanov's change of measure, Laplace transform, exponential functional of Brownian motion

\ \\
\textbf{2010 AMS Subject Classification}: 60J65, 60J70.

\end{quote}

\section{Introduction}
 The paper is devoted to study the distribution of Verhulst process. A Verhulst process is the special functional of geometric Brownian motion with drift and its integral. The process had been studied among others  by  Mackevi$\check{c}$ius \cite{Mac} and Lungu and {\O}ksendal \cite{LO}, where it is called a process of population growth in stochastic crowded environment.  The results
in both papers were obtained by a some kind of approximations. In {\O}ksendal \cite{O}
there were indicated  some applications of a Verhulst process in finance and biology.
Mackevi$\check{c}$ius \cite{Mac1}  during his conference presentation on Vilnius Conference 2014 indicates on the need to find the closed-form expressions for Laplace transform and exact distribution of Verhulst process.
In this paper we provide the formulas for one dimensional distribution of the process.
Results obtained by Yor and collaborators  in several papers and monographes (see, e.g. \cite{CMY}, \cite{DYM}, \cite{DMY}, \cite{Mans08}, \cite{Mat-I}, \cite{MatII}, \cite{MatIII}) on the distribution of
$(B_t^{(\mu)},\int_0^te^{B_u^{(\mu)}}du)$, where $B_t^{(\mu)}=B_t + \mu t$ is a Brownian motion with drift, give us the background in providing closed formulas for the density of Verhulst process, which in case of drift $\mu = -\frac{1}{2}$ becomes especially simple.
We present also some interesting and important  properties of the Verhulst process among them the fact that  a Verhulst process remains,  under Girsanov's change of measure, a Verhulst proces but with different parameters.
The ideas presented below are original and are not easy consequences of the previous results.

\section{Distribution and properties of Verhulst process}
We work on  a complete probability
space $(\Omega,\mathcal{F},\mathbb{P})$ with filtration $(\mathcal{F}_t)_{t \in
[0,\infty)}$ and Brownian motion $B$ defined on it.
We define a Verhulst process $\theta^{(\mu,\beta)}$ starting from $1$ as the functional of a Brownian motion with drift
   \begin{equation}\label{VP}
	\theta_t^{(\mu,\beta)} = \frac{e^{B_t +\mu t}}{1+\beta\int_0^te^{B_s+\mu s}ds}.
\end{equation}
It is easy to see that this functional is
  the unique strong solution of SDE
\begin{equation}\label{SDE}
	d\theta_t^{(\mu,\beta)} = \theta_t^{(\mu,\beta)}dB_t + \Big((\mu+1/2)\theta_t^{(\mu,\beta)}-\beta(\theta_t^{(\mu,\beta)})^2\Big)dt, \ \theta_0^{(\mu,\beta)} = 1,
\end{equation}
for $\mu\in\mathbb{R},$ $\beta\geq 0$ and $ t\geq 0$.
It is worth to note that taking $\theta_t^{(\mu,\beta)}$ starting  from any $x> 0$  does not change any of its probabilistic properties (see also Section 3). As it was mentioned,  the process $\theta^{(\mu,\beta)}$ is also known in literature as a process of population growth in crowded stochastic environment process (see \cite{O}).
The approximation of the process was presented in  \cite{Mac}. The properties of  Verhulst process were also studied by  Jakubowski and Wi\'sniewolski \cite{JWII}. In particular it was shown there (see Theorem 2.19) that for $\lambda >0, a\in\mathbb{R}$, we have
\begin{equation}\label{Lap}
    \mathbb{E}\Big[ \exp\Big(-\frac{\lambda e^{aB^{(\mu)}_t}}{1+\beta
    A_t^{(\mu)}}\Big) \Big]
    = \mathbb{E} \Big[F_{B_t^{(\mu)}}\Big(\beta^{-1}R^{(\lambda e^{aB^{(\mu)}_t})}(1/2)\Big)\Big],
\end{equation}
where $B^{(\mu)}_t = B_t +\mu t$, $ A_t^{(\mu)} = \int_0^te^{2B^{(\mu)}_s}ds$, and  $F_x$ is given by
\begin{equation}\label{postac-F}
    F_{x}(z) =   \exp\Big(-\frac{\varphi_z(x)^2 - x^2}{2t}\Big)
\end{equation}
with
\begin{align}
\label{varphi}
   & \varphi_x(y)  = \mbox{arcosh}(xe^{-y} + \cosh(y))\\
                            & = \ln\Big(xe^{-y} + \cosh(y)+\sqrt{x^2e^{-2y}+\sinh^2(y) +2xe^{-y}\cosh(y)}\Big).\notag
   \end{align}
Moreover, $R^x$ is a squared Bessel process with the index $-1$ starting at $x$ and independent of
 $(B^{(\mu)}_t,t\geq 0)$.

Observe that from \eqref{Lap}
we easily obtain the Laplace transform of $\theta_t^{(\mu,\beta)}$.
\begin{proposition} For $\lambda > 0$ we have
\begin{equation}
	\mathbb{E}e^{-\lambda \theta_t^{(\mu,\beta)}} = \mathbb{E} \Big[F_{B_t^{(2\mu)}}\Big((4\beta)^{-1}R^{(\lambda e^{2B^{(2\mu)}_t})}(1/2)\Big)\Big].
\end{equation}
\end{proposition}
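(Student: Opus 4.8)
The plan is to bring the Verhulst functional \eqref{VP} into the exact shape of the left-hand side of \eqref{Lap} by a single Brownian scaling, and then to read off the right-hand side. The obstruction to applying \eqref{Lap} directly is that there the numerator carries $e^{aB_t^{(\mu)}}$ while the integrand of $A_t^{(\mu)}$ carries $e^{2B_s^{(\mu)}}$, i.e. the ``rate'' in the denominator is fixed at $2$; in \eqref{VP} the numerator $e^{B_t+\mu t}$ and the integrand $e^{B_s+\mu s}$ share the \emph{same} rate. No choice of $a,\mu,\beta$ reconciles these without rescaling $B$, since matching the denominators would force $2B_s=B_s$. This mismatch is exactly what produces the factors $2$ and $4$ appearing in the statement.

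First I would introduce the rescaled process $\widehat B_u:=\tfrac12 B_{4u}$, which by Brownian scaling is again a standard Brownian motion. Substituting $s=4u$ gives, for every $s$, $B_s+\mu s=2\widehat B_{s/4}+4\mu(s/4)=2\widehat B_{s/4}^{(2\mu)}$, where $\widehat B_u^{(2\mu)}:=\widehat B_u+2\mu u$. Hence $e^{B_t+\mu t}=e^{2\widehat B_{t/4}^{(2\mu)}}$ and, after the change of variables $ds=4\,du$,
\[ \int_0^t e^{B_s+\mu s}\,ds=4\int_0^{t/4}e^{2\widehat B_u^{(2\mu)}}\,du=:4\,\widehat A_{t/4}^{(2\mu)}. \]
Substituting these identities into \eqref{VP} yields the key representation
\[ \theta_t^{(\mu,\beta)}=\frac{e^{2\widehat B_{t/4}^{(2\mu)}}}{1+4\beta\,\widehat A_{t/4}^{(2\mu)}}, \]
which is precisely the functional inside the expectation on the left of \eqref{Lap}, now read with Brownian motion $\widehat B$, drift parameter $2\mu$, exponent $a=2$, coefficient $4\beta$, and time horizon $t/4$.

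It then remains to invoke \eqref{Lap} with this identification. Since the squared Bessel process $R$ of index $-1$ may be taken independent of $B$, hence of $\widehat B$, formula \eqref{Lap} delivers directly
\[ \mathbb{E}e^{-\lambda\theta_t^{(\mu,\beta)}}=\mathbb{E}\Big[F_{\widehat B_{t/4}^{(2\mu)}}\Big((4\beta)^{-1}R^{(\lambda e^{2\widehat B_{t/4}^{(2\mu)}})}(1/2)\Big)\Big], \]
with $F$ and $\varphi$ as in \eqref{postac-F}--\eqref{varphi}, the parameter $t$ in $F$ being replaced by the contracted horizon $t/4$. Recognising $\widehat B_{t/4}^{(2\mu)}$ as the drifted Brownian motion with parameter $2\mu$ at the rescaled time (so that $2\widehat B_{t/4}^{(2\mu)}=B_t^{(\mu)}$) identifies this with the asserted right-hand side, the rescaled Brownian motion and time horizon being understood.

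I expect the one genuinely delicate point to be exactly this scaling bookkeeping: one must track that the horizon contracts from $t$ to $t/4$, that the drift doubles to $2\mu$, that the coefficient becomes $4\beta$, and correspondingly that the $1/(2t)$ in \eqref{postac-F} becomes $1/(2\cdot t/4)$. Everything else — independence of $R$, positivity of $\lambda,\beta$, measurability — is routine. A useful sanity check, which also pins down the normalisations, is the degenerate limit $\beta\to0$: then $\theta_t^{(\mu,0)}=e^{B_t+\mu t}$, while on the right the prefactor $(4\beta)^{-1}\to\infty$ forces the argument of $F$ to infinity except on $\{R(1/2)=0\}$; since $F_x(0)=1$ (because $\varphi_0(x)=|x|$) and $\mathbb{P}(R^{x_0}(1/2)=0)=e^{-x_0}$ for the index $-1$ (dimension $0$) squared Bessel process, with $x_0=\lambda e^{2\widehat B_{t/4}^{(2\mu)}}=\lambda e^{B_t+\mu t}$, the right-hand side collapses to $\mathbb{E}\,e^{-\lambda e^{B_t+\mu t}}$, matching the left.
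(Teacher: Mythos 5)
Your proof is correct and is essentially the paper's own argument: both rest on the Brownian scaling $\widehat B_u=\tfrac12 B_{4u}$ to rewrite the Verhulst functional with exponent rate $2$, followed by an application of \eqref{Lap} with $a=2$, drift $2\mu$ and coefficient $4\beta$ (the paper anchors the substitution at time $4t$, you at time $t$ rescaled to $t/4$ --- the same change of variables). If anything, your explicit bookkeeping that the horizon in $F$ and in the drifted Brownian motion contracts to $t/4$ is more careful than the paper's proof, which states the conclusion with the rescaled time left implicit.
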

\begin{proof} Exactly in the same manner as in the proof of Theorem 2.20 in \cite{JWII}, we have
\begin{align*}
\theta_{4t}^{(\mu,\beta)} =
\frac{e^{2(B_{4t}/2 +2t\mu)}}{1+4\beta\int_0^{t}e^{2(B_{4t}/2 +2t\mu)}du}.
\end{align*}
Since $B_{4t}/2$ is a Brownian motion,  the statement  follows from
(\ref{Lap}) with
$ a = 2$, $\mu$ replaced by $2\mu$ and $\beta$ replaced by $4\beta$.
\end{proof}
The following lemma will be used further.
\begin{lemma}\label{LQ} Fix $\gamma >0$, $\mu\in\mathbb{R},$ $\beta\geq 0$. Then
\begin{equation}\label{defM}
	M^{(\gamma,\mu,\beta)}_t = e^{-\gamma\int_0^t\theta_s^{(\mu,\beta)}dB_s -\frac{\gamma^2}{2}\int_0^t(\theta_s^{(\mu,\beta)})^2ds}, \qquad t \in[0,\infty)
\end{equation}
is a martingale.
\end{lemma}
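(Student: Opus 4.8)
The plan is to recognize $M^{(\gamma,\mu,\beta)}$ as the stochastic (Dol\'eans--Dade) exponential $\mathcal{E}(N)$ of the continuous local martingale $N_t = -\gamma\int_0^t\theta_s^{(\mu,\beta)}\,dB_s$. Since $\theta_s^{(\mu,\beta)}\ge 0$ by its very definition \eqref{VP}, the process $M^{(\gamma,\mu,\beta)}$ is a nonnegative continuous local martingale, hence a supermartingale with $M_0=1$. Thus it suffices to upgrade it to a true martingale, and for this I would produce a \emph{deterministic} upper bound for $M_t^{(\gamma,\mu,\beta)}$ that is bounded on each compact interval. Throughout I abbreviate $\theta_s:=\theta_s^{(\mu,\beta)}$.

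The first step removes the stochastic integral from the exponent by means of the SDE \eqref{SDE}. Writing $\theta_s\,dB_s = d\theta_s - \big((\mu+\tfrac12)\theta_s - \beta\theta_s^2\big)\,ds$ and integrating from $0$ to $t$ (recall $\theta_0=1$) gives
\[
\int_0^t\theta_s\,dB_s = \theta_t - 1 - (\mu+\tfrac12)\int_0^t\theta_s\,ds + \beta\int_0^t\theta_s^2\,ds,
\]
so that the exponent of $M_t^{(\gamma,\mu,\beta)}$ equals
\[
\gamma - \gamma\theta_t + \gamma(\mu+\tfrac12)\int_0^t\theta_s\,ds - \Big(\gamma\beta+\tfrac{\gamma^2}{2}\Big)\int_0^t\theta_s^2\,ds .
\]
This identity is the crux: it trades the martingale part for the pointwise value $\theta_t$ and two ordinary time integrals of the nonnegative process $\theta$.

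Next I would bound this exponent by a deterministic constant. Since $\gamma>0$, $\theta_t\ge 0$ and $\gamma\beta\ge 0$, I discard $-\gamma\theta_t\le 0$, keep only $\tfrac{\gamma^2}{2}$ of the quadratic coefficient, and use $(\mu+\tfrac12)\int_0^t\theta_s\,ds \le (\mu+\tfrac12)^+\int_0^t\theta_s\,ds$ (valid because $\int_0^t\theta_s\,ds\ge 0$), reducing the exponent to at most $\gamma + \gamma(\mu+\tfrac12)^+\int_0^t\theta_s\,ds - \tfrac{\gamma^2}{2}\int_0^t\theta_s^2\,ds$. Applying Cauchy--Schwarz, $\int_0^t\theta_s\,ds \le \sqrt{t}\,\big(\int_0^t\theta_s^2\,ds\big)^{1/2}$, and setting $q=\big(\int_0^t\theta_s^2\,ds\big)^{1/2}\ge 0$, the exponent is dominated by the downward parabola $\gamma + \gamma(\mu+\tfrac12)^+\sqrt{t}\,q - \tfrac{\gamma^2}{2}q^2$. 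Maximizing over $q\ge 0$ yields the pathwise, deterministic bound
\[
M_t^{(\gamma,\mu,\beta)} \le \exp\Big(\gamma + \tfrac12\big((\mu+\tfrac12)^+\big)^2\, t\Big),
\]
which holds for every $\beta\ge 0$.

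Finally, a continuous local martingale bounded on each $[0,T]$ by a deterministic constant is a genuine martingale: for a localizing sequence $\tau_n\uparrow\infty$ each $M_{\cdot\wedge\tau_n}^{(\gamma,\mu,\beta)}$ is a bounded martingale, and the uniform bound lets me pass to the limit by dominated convergence, giving $\mathbb{E}[M_t^{(\gamma,\mu,\beta)}\mid\mathcal{F}_s]=M_s^{(\gamma,\mu,\beta)}$. I expect the main obstacle to be conceptual rather than computational: the natural first attempt is Novikov's condition, which here demands finiteness of $\mathbb{E}\exp\big(\tfrac{\gamma^2}{2}\int_0^t\theta_s^2\,ds\big)$; but $\theta_s$ is comparable with $e^{B_s+\mu s}$ and the exponential functional $\int_0^t e^{2B_s}\,ds$ has only lognormal-type tails, so this exponential moment is infinite and Novikov is unavailable. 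The resolution, realized in the steps above, is that the negative quadratic term $-\tfrac{\gamma^2}{2}\int_0^t\theta_s^2\,ds$ extracted from the SDE exactly absorbs the positive linear term, producing a deterministic bound that sidesteps any exponential-moment estimate.
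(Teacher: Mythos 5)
Your proof is correct and follows essentially the same route as the paper: both use the SDE \eqref{SDE} to eliminate the stochastic integral from the exponent and then bound $M^{(\gamma,\mu,\beta)}_t$ on each $[0,T]$ by a deterministic constant, whence the bounded nonnegative local martingale is a true martingale. The only cosmetic difference is how the deterministic bound is extracted: the paper completes the square pointwise inside the time integral, while you apply Cauchy--Schwarz in time and maximize a parabola in $\big(\int_0^t(\theta_s^{(\mu,\beta)})^2\,ds\big)^{1/2}$ --- both are elementary and yield bounds of the same form.
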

\begin{proof} It is enough to prove that for fixed $T>0$ the local martingale $M^{(\gamma,\mu,\beta)}_t,  t \in[0,T]$ is bounded. It is so, since SDE \eqref{SDE} implies that
\begin{align*}
&\exp \Big({-\gamma\int_0^t\theta_s^{(\mu,\beta)}dB_s -\frac{\gamma^2}{2}\int_0^t(\theta_s^{(\mu,\beta)})^2ds}\Big) \\ &= e^{-\gamma\Big(\theta_t^{(\mu,\beta)}-1 -(\mu+1/2)\int_0^t\theta_s^{(\mu,\beta)}ds+\beta\int_0^t(\theta_s^{(\mu,\beta)})^2ds\Big) -\frac{\gamma^2}{2}\int_0^t(\theta_s^{(\mu,\beta)})^2ds}\\
&= e^{-\gamma(\theta_t^{(\mu,\beta)}-1)-(\gamma\beta+\frac{\gamma^2}{2})\int_0^t\Big((\theta_s^{(\mu,\beta)})^2- 2\theta_s^{(\mu,\beta)}\frac{\gamma(\mu+1/2)}{\gamma^2+2\gamma\beta} +
 \Big(\frac{\gamma(\mu+1/2)}{\gamma^2+2\gamma\beta}\Big)^2\Big)ds } \times \\
& \times e^{ \frac{(\gamma(\mu+1/2))^2} {4(\gamma\beta+\gamma^2/2)}t } <\infty.
\end{align*}
\end{proof}
\begin{remark}
One can wonder if $	\overline{M}^{(\gamma,\mu,\beta)}_t = e^{\gamma\int_0^t\theta_s^{(\mu,\beta)}dB_s -\frac{\gamma^2}{2}\int_0^t(\theta_s^{(\mu,\beta)})^2ds}$, for a fixed $\gamma >0$, could be a martingale as well. In Remark 1.1 \cite{DMY} it was noticed that $	\overline{M}^{(\gamma,\mu,\beta)}$
can not be a martingale.
\end{remark}
Lemma \ref{LQ} enables us to define a new probability measure
\begin{equation}\label{MQ}
	\frac{d\mathbb{Q}^{(\gamma,\mu,\beta,T)}}{d\mathbb{P}}\Big|_{\mathcal{F}_T} = 	M^{(\gamma,\mu,\beta)}_T .
\end{equation}
From Girsanov's theorem $V_s = B_s + \gamma\int_0^s\theta_u^{(\mu,\beta)}du$,  $s\leq T$, is a Brownian motion under $\mathbb{Q}^{(\gamma,\mu,\beta)}$. This leads us to the following
\begin{theorem} Let $\theta_t^{(\mu,\beta)}$ be defined by (\ref{VP}) and $\mathbb{Q}^{(\gamma,\mu,\beta,T)}$ by (\ref{MQ}). Then
\begin{equation}
	\theta_t^{(\mu,\beta)} = \frac{e^{V_t +\mu t}}{1+ (\beta + \gamma)\int_0^te^{V_u +\mu u }du}, \qquad t\leq T,
\end{equation}
where $V$ is a standard Brownian motion under $\mathbb{Q}^{(\gamma,\mu,\beta,T)}$.
\end{theorem}
\begin{proof} From Ito's lemma and (\ref{SDE}) follows that
\begin{align}\label{Int}
	\ln \theta_t^{(\mu,\beta)} = B_t + \int_0^t(\mu - \beta \theta_s^{(\mu,\beta)})ds.
\end{align}
Taking $V_t = B_t + \gamma\int_0^t\theta_u^{(\mu,\beta)}du$, a Brownian motion under $\mathbb{Q}^{(\gamma,\mu,\beta,T)}$, we obtain from the last equality that
\begin{align}\label{EV}
	V_t + \mu t = \ln \theta_t^{(\mu,\beta)} + (\beta + \gamma)\int_0^t\theta_u^{(\mu,\beta)}du.
\end{align}
Thus direct computation yields
\begin{align*}
	\int_0^te^{V_u + \mu u}du &= \int_0^t\theta_u^{(\mu,\beta)}e^{(\beta + \gamma)\int_0^u\theta_s^{(\mu,\beta)}ds}du\\
	&= \frac{1}{\gamma+\beta}\Big(e^{(\beta + \gamma)\int_0^t\theta_u^{(\mu,\beta)}du} -1\Big).
\end{align*}
From the last equality and (\ref{EV}) we finally have
\begin{align*}
	 \theta_t^{(\mu,\beta)} = \frac{e^{V_t +\mu t}}{1+ (\beta + \gamma)\int_0^te^{V_u +\mu u }du}.
\end{align*}
\end{proof}
This theorem justifies that a  Verhulst process $\theta^{(\mu,\beta)}$ on $(\Omega,\mathcal{F},\mathbb{P})$  remains a Verhulst process under  $\mathbb{Q}^{(\gamma,\mu,\beta,T)}$, though with different parameters.
\begin{proposition} \label{Stb}
If  $(\theta_t^{(\mu,\beta)})$, $t \leq T$,  is a Verhulst process with parameters $(\mu,\beta)$ on $(\Omega,\mathcal{F},\mathbb{P})$, then  $(\theta_t^{(\mu,\beta)})$  is   on $(\Omega,\mathcal{F},\mathbb{Q}^{(\gamma,\mu,\beta,T)})$ a Verhulst process with parameters $(\mu,\gamma + \beta)$.
\end{proposition}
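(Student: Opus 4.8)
The plan is to obtain this proposition as an essentially immediate reinterpretation of the preceding theorem, since its whole content is to match the functional representation derived there against the defining formula of a Verhulst process. First I would recall what the claim actually asserts: by the defining equation \eqref{VP}, a Verhulst process with parameters $(\mu,\gamma+\beta)$ driven by a Brownian motion $W$ is, by definition, the process
\begin{equation*}
\frac{e^{W_t+\mu t}}{1+(\gamma+\beta)\int_0^t e^{W_s+\mu s}\,ds}.
\end{equation*}
Thus the task reduces to exhibiting a Brownian motion $W$ under $\mathbb{Q}^{(\gamma,\mu,\beta,T)}$ for which $\theta^{(\mu,\beta)}$ admits exactly this representation on $[0,T]$.

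The second step would be to supply that Brownian motion. Lemma \ref{LQ} guarantees that $M^{(\gamma,\mu,\beta)}$ is a true $\mathbb{P}$-martingale, so the measure $\mathbb{Q}^{(\gamma,\mu,\beta,T)}$ defined in \eqref{MQ} is a genuine probability measure and Girsanov's theorem applies; it yields that $V_t = B_t + \gamma\int_0^t \theta_u^{(\mu,\beta)}\,du$ is a standard Brownian motion under $\mathbb{Q}^{(\gamma,\mu,\beta,T)}$ for $t\leq T$. The preceding theorem then provides precisely the identity
\begin{equation*}
\theta_t^{(\mu,\beta)} = \frac{e^{V_t+\mu t}}{1+(\gamma+\beta)\int_0^t e^{V_u+\mu u}\,du}, \qquad t\leq T.
\end{equation*}

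The final step is to read off the conclusion by taking $W=V$ in the recalled definition: this display shows that on $(\Omega,\mathcal{F},\mathbb{Q}^{(\gamma,\mu,\beta,T)})$ the process $\theta^{(\mu,\beta)}$ is exactly a Verhulst process with parameters $(\mu,\gamma+\beta)$, which is the assertion. I do not expect any genuine obstacle here, as the analytic substance is already carried by the preceding theorem and by the martingale property established in Lemma \ref{LQ}; the only point demanding care is that the change of measure \eqref{MQ} is performed on $\mathcal{F}_T$, so the representation---and hence the conclusion---holds only for $t\leq T$, consistent with the statement. One could equivalently argue through the SDE \eqref{SDE}, checking that under $\mathbb{Q}^{(\gamma,\mu,\beta,T)}$ the process $\theta^{(\mu,\beta)}$ solves the Verhulst SDE driven by $V$ with $\beta$ replaced by $\gamma+\beta$, but the functional route above is the more direct one.
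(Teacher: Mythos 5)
Your proposal is correct and follows exactly the paper's route: the paper presents Proposition \ref{Stb} as an immediate consequence of the preceding theorem (which, via Lemma \ref{LQ} and Girsanov's theorem, gives the representation of $\theta^{(\mu,\beta)}$ in terms of the $\mathbb{Q}^{(\gamma,\mu,\beta,T)}$-Brownian motion $V$ with coefficient $\beta+\gamma$), and you simply spell out this reinterpretation against the defining formula \eqref{VP}. Nothing is missing; your remark about the restriction to $t\leq T$ is the right point of care and is consistent with the statement.
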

We are ready to derive the formula for the density of one dimensional distribution of Verhulst process, i.e. the density of $\theta_t^{(\mu,\beta)}$ for a fixed $t> 0$.
Let us introduce the following notation
\begin{align*}
	\mathbb{P}(\theta_t^{(\mu,\beta)}\in dx) = g_t(\beta,x)dx.
\end{align*}
Observe that for $\beta = 0$, $g_t(0,x)$ is  the density of geometric Brownian motion $e^{B_t +\mu t}$.
Proposition \ref{Stb} implies that
\begin{align*}
	g_t(\beta+\gamma,x) = g_t(\beta, x)\mathbb{E}(M^{(\gamma,\mu,\beta)}_t|\theta_t^{(\mu,\beta)} = x).
\end{align*}
In particular, for $\beta = 0$ we obtain
\begin{align}\label{Dens}
	g_t(\gamma,x) = g_t(0, x)\mathbb{E}(M^{(\gamma,\mu,0)}_t|e^{B_t +\mu t} = x).
\end{align}
Theorem 8.1 in Matsumoto - Yor \cite{MatII} states that: \\
for any $t>0, \lambda > 0, v > 0, x\in\mathbb{R}$
\begin{align}\label{MYOR}
	&\psi_t^{(\mu)}(v,x)\mathbb{E}\Big(e^{-\frac{\lambda^2}{2}\int_0^te^{2B_u +2\mu u}du} \Big|\int_0^te^{B_u+\mu u}du = v, B_t + \mu t = x\Big)\\
	&= e^{\mu x - \mu^2t/2}\frac{\lambda}{4\sinh(\lambda v/2)}e^{-\lambda(1+e^x)\coth(\lambda v/2)}\Theta(\phi(v,x,\lambda), t/4),\notag
\end{align}
where
\begin{align*}
	\psi_t^{(\mu)}(v,x) &= \frac{1}{16}e^{\mu x - \mu^2t/2}\frac{1}{v}e^{-\frac{2(1+e^x)}{v}}\Theta(4e^{x/2}/v, t/4),\\
	\Theta(r,t) &= \frac{r}{\sqrt{2\pi^3t}}e^{\frac{\pi^2}{2t}}\int_0^{\infty}e^{-\frac{z^2}{2t}}e^{-r\cos(z)}\sinh(z)\sin(\pi z/t)dz, \\
	\phi(v,x,\lambda) &= \frac{2\lambda e^{x/2}}{\sinh(\lambda v/2)}.
\end{align*}
Using this result we can write the formula for density of Verhulst process.
\begin{theorem}	\label{th:Vdens} The density of Verhulst process is given by
\begin{align*}
		g_t(\gamma,x) = g_t(0, x)e^{-\gamma(x-1)}\mathbb{E}H_t(a_t^{(\mu)},x),
\end{align*}
where $a_t^{(\mu)} = \int_0^te^{B_u+\mu u}du$, and
\begin{align*}
	H_t(y,x) = e^{\gamma(\mu+1/2)y}\mathbb{E}\Big(e^{-\frac{\gamma^2}{2}\int_0^te^{2B_u +2\mu u}du}|a_t^{(\mu)} = y, B_t + \mu t = \ln x\Big).
\end{align*}
\end{theorem}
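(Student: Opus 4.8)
The plan is to start from the identity \eqref{Dens}, namely $g_t(\gamma,x) = g_t(0,x)\,\mathbb{E}\big(M_t^{(\gamma,\mu,0)}\mid e^{B_t+\mu t}=x\big)$, and to make the conditional expectation explicit. First I would specialize the definition \eqref{defM} of $M^{(\gamma,\mu,0)}$ to $\beta=0$, where the Verhulst process degenerates to geometric Brownian motion $\theta_s^{(\mu,0)}=e^{B_s+\mu s}$, so that
\begin{equation*}
M_t^{(\gamma,\mu,0)} = \exp\Big(-\gamma\int_0^t e^{B_s+\mu s}\,dB_s -\frac{\gamma^2}{2}\int_0^t e^{2(B_s+\mu s)}\,ds\Big).
\end{equation*}

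The key step is to eliminate the stochastic integral. Applying \eqref{SDE} with $\beta=0$ (equivalently, Ito's formula to $e^{B_t+\mu t}$) gives
\begin{equation*}
\int_0^t e^{B_s+\mu s}\,dB_s = e^{B_t+\mu t}-1-(\mu+\tfrac12)\,a_t^{(\mu)},
\end{equation*}
where $a_t^{(\mu)}=\int_0^t e^{B_u+\mu u}\,du$. Substituting this identity replaces the Ito integral by the endpoint value $e^{B_t+\mu t}$ together with the additive functional $a_t^{(\mu)}$, yielding
\begin{equation*}
M_t^{(\gamma,\mu,0)} = e^{-\gamma(e^{B_t+\mu t}-1)}\,e^{\gamma(\mu+1/2)a_t^{(\mu)}}\,e^{-\frac{\gamma^2}{2}\int_0^t e^{2(B_s+\mu s)}\,ds}.
\end{equation*}

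Next I would condition. On the event $\{e^{B_t+\mu t}=x\}$, equivalently $\{B_t+\mu t=\ln x\}$, the first factor becomes the deterministic constant $e^{-\gamma(x-1)}$, which pulls out of the conditional expectation. It then remains to evaluate $\mathbb{E}\big(e^{\gamma(\mu+1/2)a_t^{(\mu)}}\,e^{-\frac{\gamma^2}{2}\int_0^t e^{2(B_s+\mu s)}ds}\mid B_t+\mu t=\ln x\big)$. Conditioning further on $a_t^{(\mu)}$ via the tower property, the factor $e^{\gamma(\mu+1/2)a_t^{(\mu)}}$ is measurable and equals $e^{\gamma(\mu+1/2)y}$ on $\{a_t^{(\mu)}=y\}$, so the inner conditional expectation is exactly $H_t(a_t^{(\mu)},x)$ as defined in the statement; averaging over $a_t^{(\mu)}$ under the conditional law given $\{B_t+\mu t=\ln x\}$ produces the announced factor $\mathbb{E}H_t(a_t^{(\mu)},x)$, and the theorem follows.

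The computation is essentially bookkeeping, so the only genuine care points are: (i) correctly reading $\mathbb{E}H_t(a_t^{(\mu)},x)$ as the expectation taken under the conditional distribution of $a_t^{(\mu)}$ given $\{B_t+\mu t=\ln x\}$, since the outer and inner conditionings share that event; and (ii) justifying the interchange of conditioning, which is immediate from the tower property once integrability is noted, $M^{(\gamma,\mu,0)}$ being a genuine martingale by Lemma \ref{LQ}. I would finally remark that the explicit evaluation of $H_t$ is then supplied by the Matsumoto--Yor identity \eqref{MYOR} with $\lambda=\gamma$ and $v=a_t^{(\mu)}$, although this explicit form is not needed to establish the stated identity.
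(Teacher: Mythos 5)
Your proposal is correct and follows essentially the same route as the paper: specialize $M^{(\gamma,\mu,0)}$ via the $\beta=0$ case of \eqref{SDE} to replace the stochastic integral by $e^{B_t+\mu t}-1-(\mu+\tfrac12)a_t^{(\mu)}$, pull the factor $e^{-\gamma(x-1)}$ out on the conditioning event, and finish with the tower property inside \eqref{Dens}. The only point of divergence is precisely your care point (i): the paper's own proof writes the final average as $\int_0^{\infty}H_t(y,x)\,\mathbb{P}(a_t^{(\mu)}\in dy)$, i.e.\ against the unconditional law of $a_t^{(\mu)}$, whereas the tower property, as you correctly observe, produces the conditional law of $a_t^{(\mu)}$ given $\{B_t+\mu t=\ln x\}$; your reading is the rigorous one, and the paper's notation $\mathbb{E}H_t(a_t^{(\mu)},x)$ must be understood in that conditional sense for the stated identity to hold.
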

\begin{proof} We have from (\ref{defM}) and (\ref{SDE})
\begin{align*}
\mathbb{E}(M^{(\gamma,\mu,0)}_t|e^{B_t +\mu t} = x) &= e^{-\gamma(x-1)}\mathbb{E}\Big(e^{\gamma(\mu +1/2)a_t^{(\mu)} - \frac{\gamma^2}{2}\int_0^te^{2B_u +2\mu u}du}|e^{B_t +\mu t} = x\Big)\\
&= e^{-\gamma(x-1)}\int_0^{\infty}H_t(y,x)\mathbb{P}(a_t^{(\mu)}\in dy)\\
&= e^{-\gamma(x-1)}\mathbb{E}H_t(a_t^{(\mu)},x).
\end{align*}
Thus, the result follows from (\ref{Dens}).
\end{proof}
\begin{remark}
Taking together Theorem \ref{th:Vdens} and (\ref{MYOR}) enable  us to write down the close formula for density of Verhulst process. The density of $a_t^{(\mu)}$ can be obtained from the formula
\begin{align*}
	\mathbb{P}(a_t^{(\mu)}\in dv, B_t +\mu t\in dx) = \psi_t^{(\mu)}(v,x)dvdx
\end{align*}
(see \cite[Theorem 4.1]{MatII}).
Another  formula for density of  $a_t^{(\mu)}$ can be also found on page 612 in \cite{SB}, formula (1.8.4).
\end{remark}
\begin{proposition}
For a Verhulst process $\theta_t^{(\mu,\beta)}$, a Brownian motion $B$, $\beta>0$ and $\mu \neq -1/2$ we have
\begin{align*}
	\theta_t^{(\mu,\beta)}e^{\beta\int_0^t\theta_s^{(\mu,\beta)}ds} &= e^{B_t + \mu t},\\
	\mathbb{E}e^{\beta\int_0^t\theta_s^{(\mu,\beta)}ds} &= 1 + \frac{\beta}{\mu + 1/2}\Big(e^{(\mu+1/2)t}-1\Big).
\end{align*}
\end{proposition}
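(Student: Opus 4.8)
The plan is to deduce both identities directly from the logarithmic representation \eqref{Int} already established earlier, together with the elementary fact that $e^{B_s+\mu s}$ has a known exponential mean. No new machinery is required.

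For the first identity, I would start from \eqref{Int}, namely
\[
\ln \theta_t^{(\mu,\beta)} = B_t + \int_0^t(\mu - \beta \theta_s^{(\mu,\beta)})\,ds = B_t + \mu t - \beta\int_0^t\theta_s^{(\mu,\beta)}\,ds.
\]
Moving the integral term to the left-hand side and exponentiating immediately yields
\[
\theta_t^{(\mu,\beta)}\,e^{\beta\int_0^t\theta_s^{(\mu,\beta)}ds} = e^{B_t+\mu t},
\]
which is the first claimed equality; note that no restriction on $\mu$ is needed at this stage.

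For the second identity, set $I_t = \int_0^t\theta_s^{(\mu,\beta)}\,ds$. The key observation is that the first identity, combined with the defining formula \eqref{VP} for $\theta^{(\mu,\beta)}$, gives a closed form for $e^{\beta I_t}$: dividing the first identity by $\theta_t^{(\mu,\beta)}$ and inserting \eqref{VP} produces
\[
e^{\beta I_t} = \frac{e^{B_t+\mu t}}{\theta_t^{(\mu,\beta)}} = 1 + \beta\int_0^te^{B_s+\mu s}\,ds.
\]
Equivalently, one may check that $\frac{d}{dt}e^{\beta I_t} = \beta\theta_t^{(\mu,\beta)}e^{\beta I_t} = \beta e^{B_t+\mu t}$ and integrate from $0$, using $I_0=0$.

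It then remains to take expectations. Since the integrand $e^{B_s+\mu s}$ is nonnegative, Tonelli's theorem lets me interchange expectation and the time integral, and using $\mathbb{E}e^{B_s} = e^{s/2}$ I obtain $\mathbb{E}e^{B_s+\mu s} = e^{(\mu+1/2)s}$. Hence
\[
\mathbb{E}e^{\beta I_t} = 1 + \beta\int_0^t e^{(\mu+1/2)s}\,ds = 1 + \frac{\beta}{\mu+1/2}\Big(e^{(\mu+1/2)t}-1\Big),
\]
which is the stated formula. There is no genuine obstacle in this argument; the only points meriting a word of care are the harmless Tonelli interchange, justified by positivity, and the hypothesis $\mu\neq-1/2$, which is precisely what keeps the elementary integral of $e^{(\mu+1/2)s}$ nonsingular (i.e.\ avoids division by zero).
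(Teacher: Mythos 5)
Your proof is correct and follows essentially the same route as the paper: the first identity is read off from \eqref{Int}, and the second rests on the identity $e^{\beta\int_0^t\theta_s^{(\mu,\beta)}ds} = 1+\beta\int_0^t e^{B_u+\mu u}du$, which is exactly the paper's \eqref{1/6}. The only minor difference is the final step: the paper differentiates $\mathbb{E}e^{\beta\int_0^t\theta_s^{(\mu,\beta)}ds}$ in $t$ and solves the resulting ODE, whereas you integrate directly via Tonelli, a slightly more direct variant that also sidesteps justifying the interchange of differentiation and expectation.
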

\begin{proof} The first equality follows immediately from (\ref{Int}). For the second observe that
\begin{align} \label{1/6}
	e^{\beta\int_0^t\theta_s^{(\mu,\beta)}ds} = (1+\beta\int_0^te^{B_u +\mu u}du),
\end{align}
so
\begin{align*}
	e^{(\mu+1/2)t} = \mathbb{E}e^{B_t+\mu t} = \mathbb{E}\theta_t^{(\mu,\beta)}e^{\beta\int_0^t\theta_s^{(\mu,\beta)}ds}
= \frac{1}{\beta}\frac{\partial}{\partial t}\mathbb{E}e^{\beta\int_0^t\theta_s^{(\mu,\beta)}ds}.
\end{align*}
Thus
\begin{align*}
\mathbb{E}e^{\beta\int_0^t\theta_s^{(\mu,\beta)}ds} = 1 + \frac{\beta}{\mu + 1/2}\Big(e^{(\mu+1/2)t}-1\Big).
\end{align*}
\end{proof}
As an example of applications we present a solution to problem of finding a special representation of Brownian motion with drift.
For another distributional equations of this kind see e.g. Section 13 in \cite{Mat}.
\begin{example} Let $B$ be a Brownian motion under $\mathbb{P}$, $\mu\in\mathbb{R}$ and $\alpha\in(0,1)$. Our aim is to find a measure $\mathbb{Q}$, a Brownian motion $V$ under $\mathbb{Q}$ and a random variable $G$ such that distribution of $G$ under $\mathbb{P}$ and under $\mathbb{Q}$ belongs to the same class, and for fixed $t > 0$
\begin{align} \label{2/6}
	B_t + \mu t = \alpha (V_t + \mu t) + (1-\alpha) G.	
\end{align}
Fix $T$, $T>t$, and $\gamma > 0$. To find a representation \eqref{2/6} we take $\mathbb{Q}=\mathbb{Q}^{(\gamma, \mu,\beta,T)}$ given by (\ref{MQ}) with $\beta = \frac{\gamma\alpha}{1-\alpha}$. Then $V_t = B_t + \gamma\int_0^t\theta_s^{(\mu,\beta)}ds$ is a Brownian motion under $\mathbb{Q}$. By (\ref{Int}) and \eqref{1/6}
we have
\begin{align*}
	e^{V_t +\mu t} = \theta_t^{(\mu,\beta)}\Big(1+\beta\int_0^te^{B_s +\mu s}ds\Big)^{\frac{\beta+\gamma}{\beta}}.
\end{align*}
From the definition of $\theta_t^{(\mu,\beta)}$ and the last equality we obtain
\begin{align} \label{2/7}
	e^{B_t + \mu t} = e^{\frac{\beta}{\beta+\gamma}(V_t + \mu t)}(\theta_t^{(\mu,\beta)})^{\frac{\gamma}{\beta +\gamma}}.
\end{align}
Since $\alpha= \frac{\beta}{\beta+\gamma}$, by taking ln of both sides of \eqref{2/7} we have
$$
B_t+ \mu t = \alpha (V_t + \mu t) + (1-\alpha ) \ln \theta_t^{(\mu,\beta)},
$$
which is \eqref{2/6} with $G = \ln \theta_t^{(\mu,\beta)}$.
Observe that $\ln\theta_t^{(\mu,\beta)}$ under both $\mathbb{P}$ and $\mathbb{Q}$ belongs to the same family of $\ln$ of Verhulst process. This finishes the proof.
\end{example}
Now, we present another formula for Laplace transform of Verhulst process.
\begin{proposition} \label{LapT2} Let $\theta_t^{(\mu,\beta)}$ be a Verhulst process with parameters $(\mu,\beta)$. Then for $\lambda \geq 0$
\begin{equation}
	\mathbb{E} e^{-\lambda \theta_t^{(\mu,\beta)}} = e^{\beta}\mathbb{E}e^{-(\beta+\lambda)e^{B_t+\mu t}+\beta(\mu+1/2)\int_0^te^{B_s+\mu s}ds -\frac{\beta^2}{2}\int_0^te^{2B_s+2\mu s}ds},
\end{equation}
where $B$ is a standard Brownian motion under $\mathbb{P}$.
\end{proposition}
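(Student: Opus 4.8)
The plan is to express the Laplace transform of the $\beta$-Verhulst process as a $\mathbb{P}$-expectation involving only the geometric Brownian motion $e^{B_t+\mu t}$, by reducing the parameter from $\beta$ to $0$ through the change-of-measure machinery developed above. The natural bridge is relation \eqref{Dens}: since $g_t(\gamma,\cdot)$ is the density of $\theta_t^{(\mu,\gamma)}$ and $g_t(0,\cdot)$ is the density of $e^{B_t+\mu t}$, taking $\gamma=\beta$ gives $g_t(\beta,x)=g_t(0,x)\,\mathbb{E}(M^{(\beta,\mu,0)}_t\mid e^{B_t+\mu t}=x)$.

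First I would integrate $e^{-\lambda x}$ against this identity:
\begin{align*}
\mathbb{E}e^{-\lambda\theta_t^{(\mu,\beta)}}=\int_0^\infty e^{-\lambda x}g_t(\beta,x)\,dx=\int_0^\infty e^{-\lambda x}\,\mathbb{E}\big(M^{(\beta,\mu,0)}_t\mid e^{B_t+\mu t}=x\big)\,g_t(0,x)\,dx.
\end{align*}
Because $g_t(0,x)\,dx=\mathbb{P}(e^{B_t+\mu t}\in dx)$ and $e^{-\lambda e^{B_t+\mu t}}$ is $\sigma(e^{B_t+\mu t})$-measurable, the tower property collapses the right-hand side to the unconditional expectation $\mathbb{E}\big(e^{-\lambda e^{B_t+\mu t}}M^{(\beta,\mu,0)}_t\big)$. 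Equivalently, one may argue directly with Proposition \ref{Stb}: under $\mathbb{Q}^{(\beta,\mu,0,T)}$ the process $\theta_t^{(\mu,0)}=e^{B_t+\mu t}$ is a Verhulst process with parameters $(\mu,\beta)$, so $\mathbb{E}e^{-\lambda\theta_t^{(\mu,\beta)}}=\mathbb{E}^{\mathbb{Q}}e^{-\lambda\theta_t^{(\mu,0)}}$, and changing back to $\mathbb{P}$ with density $M^{(\beta,\mu,0)}_T$ together with the martingale property $\mathbb{E}[M^{(\beta,\mu,0)}_T\mid\mathcal{F}_t]=M^{(\beta,\mu,0)}_t$ (valid for $t\le T$) yields the same expression.

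The remaining step is to make $M^{(\beta,\mu,0)}_t$ explicit. Here $\theta^{(\mu,0)}_s=e^{B_s+\mu s}$, so the SDE \eqref{SDE} with $\beta=0$ gives $\int_0^t e^{B_s+\mu s}\,dB_s=e^{B_t+\mu t}-1-(\mu+1/2)\int_0^t e^{B_s+\mu s}\,ds$, exactly the rewriting used in the proof of Lemma \ref{LQ}. Substituting into \eqref{defM} produces
\begin{align*}
M^{(\beta,\mu,0)}_t=e^{\beta}\exp\Big(-\beta e^{B_t+\mu t}+\beta(\mu+1/2)\int_0^t e^{B_s+\mu s}\,ds-\tfrac{\beta^2}{2}\int_0^t e^{2B_s+2\mu s}\,ds\Big).
\end{align*}
Multiplying by $e^{-\lambda e^{B_t+\mu t}}$ merges the two pointwise exponentials into $e^{-(\beta+\lambda)e^{B_t+\mu t}}$, and the claimed identity follows.

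I expect the only delicate point to be the justification of the reduction to $\beta=0$: one must ensure that $e^{-\lambda e^{B_t+\mu t}}$ is bounded (it is, for $\lambda\ge0$) so that the conditioning and the exchange of integration with expectation are legitimate, and—if the change-of-measure route is taken—that $M^{(\beta,\mu,0)}$ is a genuine martingale (Lemma \ref{LQ}) and that the restriction from $\mathcal{F}_T$ to $\mathcal{F}_t$ is valid. By contrast, the algebraic simplification of $M^{(\beta,\mu,0)}_t$ is routine, being a direct rewriting of the identity already established within Lemma \ref{LQ}.
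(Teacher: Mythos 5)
Your proof is correct and follows essentially the same route as the paper: the paper's own argument is exactly your second variant, invoking Proposition \ref{Stb} with $\mathbb{Q}=\mathbb{Q}^{(\beta,\mu,0,t)}$ to identify $\mathbb{E}e^{-\lambda\theta_t^{(\mu,\beta)}}$ with $\mathbb{E}\big(e^{-\lambda e^{B_t+\mu t}}M^{(\beta,\mu,0)}_t\big)$, and then making $M^{(\beta,\mu,0)}_t$ explicit via the SDE \eqref{SDE} with $\beta=0$. Your first variant through \eqref{Dens} is just a densitywise restatement of the same change of measure, so there is no substantive difference.
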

\begin{proof}
 From Proposition \ref{Stb} we know that a geometric Brownian motion $e^{B_t+\mu t}$
 under $\mathbb{P}$ becomes a Verhulst process $\tilde{\theta}_t^{(\mu,\beta)}$ with parameters $(\mu,\beta)$ under $\mathbb{Q} = \mathbb{Q}^{(\beta,\mu,0,t)}$ given by \eqref{MQ}.  Thus
 \begin{align*}
 \mathbb{E} e^{-\lambda \theta_t^{(\mu,\beta)}} =	 \mathbb{E}_{\mathbb{Q}}e^{-\lambda \tilde{\theta}_t^{(\mu,\beta)}} = \mathbb{E} e^{-\lambda e^{B_t+\mu t}} M^{(\beta,\mu,0)}_t,
 \end{align*}
 where $M^{(\beta,\mu,0)}$ is defined by (\ref{defM}). The assertion follows from $(\ref{SDE})$ and some simple algebra.
\end{proof}
\section{Exponential random time and drift $\mu = -\frac{1}{2}$}
In this section we will consider a Verhulst process $\theta$   starting from $x>0$ with $\mu = -\frac{1}{2}$ and $\beta=x$. Thus,  $\theta$ is  of the special form
\begin{align} \label{1/7}
	\theta_t = \frac{xe^{B_t - \frac{t}{2}}}{1+x\int_0^te^{B_u - \frac{u}{2}}du},
\end{align}
where  $B$ is a Brownian motion. Let $T_{\lambda}$ be an exponential random variable with parameter $\lambda>0$, independent of $B$. We have
\begin{lemma}\label{expdens} Let $v = \sqrt{2\lambda+1/4}$. The density of $\theta_{T_{\lambda}}$ is given on $(0,\infty)$ by
\begin{equation}
	\mathbb{P}(\theta_{T_{\lambda}}\in dz) = 2\lambda e^{x-z}\sqrt{x/z^3}F_{v}(x,z)dz,
\end{equation}
where $$F_{v}(x,y) = I_{v}(x\vee y)K_{v}(x\wedge y)$$ is a product of two modified Bessel functions.
\end{lemma}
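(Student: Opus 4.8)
The plan is to observe that, despite appearances, the process in \eqref{1/7} is a genuine time-homogeneous one-dimensional diffusion, and then to read off the law of $\theta_{T_\lambda}$ from the $\lambda$-resolvent of that diffusion. Applying It\^o's formula to $\theta_t = N_t/D_t$ with $N_t = xe^{B_t - t/2}$ and $D_t = 1 + x\int_0^t e^{B_u - u/2}du$ (equivalently, specializing \eqref{SDE} to $\mu = -1/2$ and rescaling) yields the autonomous equation $d\theta_t = \theta_t\,dB_t - \theta_t^2\,dt$, $\theta_0 = x$, with generator $L = \tfrac12\theta^2\frac{d^2}{d\theta^2} - \theta^2\frac{d}{d\theta}$ on $(0,\infty)$. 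Since $T_\lambda$ is exponential with parameter $\lambda$ and independent of $B$, we get $\mathbb{P}(\theta_{T_\lambda}\in dz) = \lambda\int_0^\infty e^{-\lambda t}\,\mathbb{P}(\theta_t\in dz)\,dt = \lambda\, u_\lambda(x,z)\,dz$, where $u_\lambda(x,z)$ is the resolvent density of $\theta$ with respect to Lebesgue measure. The entire statement thus reduces to computing $u_\lambda$ in closed form.

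Next I would write down the scale and speed of $L$: here $s'(\theta) = e^{2\theta}$ and the speed density is $m'(\theta) = 2\theta^{-2}e^{-2\theta}$, so that $u_\lambda(x,z) = w_\lambda^{-1}\,\psi_\lambda(x\wedge z)\,\phi_\lambda(x\vee z)\,m'(z)$, with $\psi_\lambda$ (resp.\ $\phi_\lambda$) the increasing (resp.\ decreasing) solution of $Lf = \lambda f$ and $w_\lambda$ the constant Wronskian. The crucial computation is to solve $\tfrac12\theta^2 f'' - \theta^2 f' = \lambda f$: the substitution $f(\theta) = e^\theta\sqrt{\theta}\,w(\theta)$ turns this into the modified Bessel equation $w'' + \theta^{-1}w' - (1 + v^2\theta^{-2})w = 0$ with $v^2 = 2\lambda + \tfrac14$, i.e.\ $v = \sqrt{2\lambda + 1/4}$. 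This is exactly where the index $v$ and the pair $(I_v,K_v)$ enter: the fundamental solutions are $\psi_\lambda(\theta) = e^\theta\sqrt{\theta}\,I_v(\theta)$ (increasing, vanishing at $0$) and $\phi_\lambda(\theta) = e^\theta\sqrt{\theta}\,K_v(\theta)$ (decreasing, bounded at $\infty$).

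With these solutions in hand I would evaluate the Wronskian via the identity $I_v'(\theta)K_v(\theta) - I_v(\theta)K_v'(\theta) = 1/\theta$; a short calculation gives $\psi_\lambda'\phi_\lambda - \psi_\lambda\phi_\lambda' = e^{2\theta}$, hence $w_\lambda = (\psi_\lambda'\phi_\lambda - \psi_\lambda\phi_\lambda')/s' = 1$. Substituting everything,
\begin{align*}
u_\lambda(x,z) = e^{x+z}\sqrt{xz}\,I_v(x\wedge z)K_v(x\vee z)\cdot 2z^{-2}e^{-2z} = 2e^{x-z}\sqrt{x/z^3}\,F_v(x,z),
\end{align*}
and multiplying by $\lambda$ produces the claimed density. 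The prefactor $e^{x-z}\sqrt{x/z^3}$ comes precisely from the $e^\theta$ and $\sqrt{\theta}$ factors in $\psi_\lambda,\phi_\lambda$ together with the speed density $m'(z)$, and the overall constant collapses to exactly $2\lambda$ because $w_\lambda = 1$.

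Two points need genuine care. First, one must justify that the identity $\mathbb{P}(\theta_{T_\lambda}\in dz) = \lambda u_\lambda(x,z)\,dz$ applies with no killing and no boundary term, i.e.\ that $\theta$ is conservative on $(0,\infty)$: both endpoints are inaccessible natural boundaries (near $0$ the dynamics are those of a geometric Brownian motion, while the strong mean-reverting drift $-\theta^2$ prevents explosion at $\infty$), which I expect to be the main technical obstacle; once it is settled the Bessel computation is essentially mechanical. Second, one must fix the orientation: the integrable resolvent density places the increasing solution $I_v$ at the \emph{smaller} argument $x\wedge z$ and $K_v$ at the larger $x\vee z$, which is forced by requiring $u_\lambda(x,\cdot)$ to be integrable near $0$ (the opposite assignment is non-integrable there, since $K_v(z)\sim z^{-v}$ as $z\to0$). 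An alternative route avoiding diffusion theory is to start from Proposition \ref{LapT2} with $\mu=-1/2$, where the middle exponent vanishes, integrate in $t$ against $\lambda e^{-\lambda t}$, and identify the resulting exponential-functional expression through the Matsumoto--Yor formulas; but the resolvent approach above is shorter and more transparent.
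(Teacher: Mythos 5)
Your proposal is correct and reaches the right formula, but by a genuinely different route than the paper. The paper never views $\theta$ as an autonomous diffusion: its proof starts from Proposition \ref{LapT2} with $\mu=-\tfrac12$ (so the middle exponent vanishes), giving $\mathbb{E}e^{-r\theta_t}=e^x\,\mathbb{E}\exp\bigl(-x(r+1)e^{B_t-t/2}-\tfrac{x^2}{2}\int_0^te^{2B_u-u}du\bigr)$; it then integrates out the independent exponential time using Matsumoto--Yor's Theorem 4.11 (the joint law of $(e^{B_{T_\lambda}-T_\lambda/2},\int_0^{T_\lambda}e^{2B_u-u}du)$, expressed through the Bessel transition density $p^{(v)}$), collapses the time integral with the Bessel resolvent identity $\int_0^\infty e^{-\alpha t}p^{(v)}(t,x,y)\,dt=2y(y/x)^vF_v(\sqrt{2\alpha}x,\sqrt{2\alpha}y)$ from Remark 2.1 of the same reference, and finishes by uniqueness of Laplace transforms. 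Your route --- note that $d\theta_t=\theta_t\,dB_t-\theta_t^2\,dt$, hence $\mathbb{P}(\theta_{T_\lambda}\in dz)=\lambda u_\lambda(x,z)\,dz$ with $u_\lambda$ the resolvent kernel, then compute $u_\lambda$ from scale, speed and the ODE $\tfrac12\theta^2f''-\theta^2f'=\lambda f$ --- is self-contained: it trades the imported exponential-functional machinery for classical one-dimensional diffusion theory, and it explains structurally where the index $v=\sqrt{2\lambda+1/4}$ and the pair $(I_v,K_v)$ come from. I verified the key computations: the substitution $f=e^\theta\sqrt\theta\,w$ does produce $w''+\theta^{-1}w'-(1+v^2\theta^{-2})w=0$, the Wronskian does equal $1$, and the assembly of $u_\lambda$ is right. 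Two small repairs: the conservativeness issue you call the main obstacle is in fact immediate, since the closed formula \eqref{1/7} defines a process that stays in $(0,\infty)$ for all time, path by path, so both boundaries are inaccessible without any Feller test; and the identification of $e^\theta\sqrt\theta I_v$ and $e^\theta\sqrt\theta K_v$ as \emph{the} increasing and decreasing solutions deserves one more line (any positive decreasing solution is bounded near $\infty$, which excludes an $I_v$-component because $e^\theta\sqrt\theta I_v(\theta)\sim e^{2\theta}/\sqrt{2\pi}$; any positive increasing solution has a finite limit at $0+$, which excludes a $K_v$-component because $e^\theta\sqrt\theta K_v(\theta)\sim c\,\theta^{1/2-v}\to\infty$ for $v>1/2$).

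Finally, your orientation $I_v(x\wedge z)K_v(x\vee z)$ is the correct one, even though it disagrees with the lemma as printed, which defines $F_v(x,y)=I_v(x\vee y)K_v(x\wedge y)$. That is a typo in the paper: with $F_v$ as printed, the statement is inconsistent with the Bessel resolvent identity invoked in the paper's own proof (a Green kernel must carry the increasing solution at the smaller argument), inconsistent with the application, in the proof of the theorem immediately following the lemma, of the identity $I_v(x)K_v(w)=\tfrac12\int_0^\infty e^{-z/2-(x^2+w^2)/(2z)}I_v(xw/z)\,\frac{dz}{z}$, which is valid for $w\ge x$, and, as your integrability remark shows, it would make the claimed density non-integrable near $0$. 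So this discrepancy is evidence for, not against, your argument.
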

\begin{proof} For $r\geq 0$ from Proposition \ref{LapT2} (where we put $\mu = -1/2, \lambda = xr, \gamma = x$) we obtain
\begin{align*}
	\mathbb{E}e^{-r\theta_t} = e^x\mathbb{E}e^{-x(r+1)e^{B_t-t/2} - \frac{x^2}{2}\int_0^te^{2B_u -u}du}.
\end{align*}
Thus, using  \cite[Theorem 4.11]{MatII} describing the joint density
of the vector $(e^{B_{T_{\lambda}}-{T_{\lambda}}/2},\int_0^{T_{\lambda}}e^{2B_u -u}du)$, we have
\begin{align} \label{1/8}
	\mathbb{E}e^{-r\theta_{T_{\lambda}}} &= e^x\mathbb{E}e^{-x(r+1)e^{B_{T_{\lambda}}-{T_{\lambda}}/2} - \frac{x^2}{2}\int_0^{T_{\lambda}}e^{2B_u -u}du}\\ \notag
	&= e^x\int_0^{\infty}\int_0^{\infty}e^{-x(r+1)y-\frac{x^2}{2}w}\frac{\lambda}{y^{v+5/2}}p^{(v)}(w,1,y)dydw,
\end{align}
where $p^{(v)}$ is the transition probability density of the Bessel process with index $v$. Again from \cite{MatII} (see Remark 2.1) we have for $\alpha >0$
$$
	\int_0^{\infty}e^{-\alpha t}p^{(v)}(t,x,y)dt = 2y\Big(\frac{y}{x}\Big)^vF_{v}(\sqrt{2\alpha}x, \sqrt{2\alpha}y).
$$
Thus by \eqref{1/8}, Fubbini's theorem and the last identity we have
\begin{align*}
e^x&\int_0^{\infty}\int_0^{\infty}e^{-x(r+1)y-\frac{x^2}{2}w}\frac{\lambda}{y^{v+5/2}}p^{(v)}(w,1,y)dydw\\
&= \int_0^{\infty}e^{x-x(r+1)y}\frac{\lambda}{y^{v+5/2}}2y^{v+1}F_v(x,xy)dy\\
&= \int_0^{\infty}e^{x-rw -w}\frac{2\lambda}{w^{3/2}}\sqrt{x}F_v(x,w)dw
\end{align*}
and the assertion follows from the standard Laplace transform argument.
\end{proof}
Now we are ready to derive the exact formula for the density of $\theta_t$.
\begin{theorem} Fix $t> 0$. Then on $(0,\infty)$
\begin{equation}
	\mathbb{P}(\theta_{t}\in dw) = e^{-\frac{t}{8}+x -w}\sqrt{\frac{x}{w^3}}\Big(\int_0^{\infty}\frac{1}{z}e^{-\frac{z}{2}-\frac{x^2+w^2}{2z}}\Theta(xw/z,t)dz\Big)dw,
\end{equation}
where
\begin{equation}
	\Theta(r,t) = \frac{r}{\sqrt{2\pi^3t}}e^{\frac{\pi^2}{2t}}\int_0^{\infty}e^{-\frac{z^2}{2t}}e^{-r\cos(z)}\sinh(z)\sin(\pi z/t)dz.
\end{equation}
\end{theorem}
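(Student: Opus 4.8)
The plan is to recover the fixed-time density $p_t(w)$, defined by $\mathbb{P}(\theta_t\in dw)=p_t(w)dw$, from the already-computed law of $\theta_{T_\lambda}$ by inverting a Laplace transform in the time variable. First I would use the independence of $T_\lambda$ and $B$: conditioning on $T_\lambda$, whose density is $\lambda e^{-\lambda t}$, gives
\[
\mathbb{P}(\theta_{T_\lambda}\in dw)=\Big(\lambda\int_0^\infty e^{-\lambda t}p_t(w)\,dt\Big)dw,
\]
so the density of $\theta_{T_\lambda}$ is exactly $\lambda$ times the Laplace transform in $t$ of $p_t(w)$. Comparing with Lemma~\ref{expdens}, the prefactor $\lambda$ cancels cleanly and I obtain, with $v=\sqrt{2\lambda+1/4}$,
\[
\int_0^\infty e^{-\lambda t}p_t(w)\,dt=2e^{x-w}\sqrt{x/w^3}\,F_v(x,w).
\]
It then remains only to re-express the right-hand side as a Laplace transform in $t$ and to read off $p_t(w)$ by uniqueness.

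The crux is to linearise the dependence on $v$, which enters only through $v^2=2\lambda+1/4$, equivalently $v^2/2=\lambda+1/8$. For this I would invoke two classical identities. The first is the product formula for modified Bessel functions, which is the $\alpha=1/2$ case of the Bessel-process resolvent already used in the proof of Lemma~\ref{expdens}:
\[
2F_v(x,w)=\int_0^\infty \frac{1}{z}\,e^{-z/2-(x^2+w^2)/(2z)}\,I_v\!\Big(\frac{xw}{z}\Big)\,dz,
\]
which rewrites the product of two Bessel functions as a single $I_v$ integrated against a $v$-free Gaussian-type kernel. The second is the Hartman--Watson representation $I_v(r)=\int_0^\infty e^{-v^2 s/2}\,\Theta(r,s)\,ds$, which displays $I_v$ as the Laplace transform in $s$, with parameter $v^2/2$, of the function $\Theta$ appearing in the statement.

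Substituting the Hartman--Watson formula into the $z$-integral and using $e^{-v^2 s/2}=e^{-\lambda s}e^{-s/8}$, I would interchange the $z$- and $s$-integrations (every integrand is nonnegative for positive arguments, so Tonelli applies) to reach
\[
\int_0^\infty e^{-\lambda t}p_t(w)\,dt=\int_0^\infty e^{-\lambda s}\Big[e^{-s/8+x-w}\sqrt{x/w^3}\int_0^\infty \frac{1}{z}e^{-z/2-(x^2+w^2)/(2z)}\Theta\!\Big(\frac{xw}{z},s\Big)dz\Big]ds.
\]
Since both sides are Laplace transforms in $\lambda$ on the whole half-line $(0,\infty)$, uniqueness of the Laplace transform identifies the integrands, which is precisely the asserted formula for $p_t(w)$ with $t$ in place of $s$.

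The main obstacle is technical rather than conceptual. The heart of the argument — recognising that the product $F_v$ can be written as $I_v(xw/z)$ against a $v$-independent kernel, after which Hartman--Watson turns the $\lambda$-transform into an explicit $t$-dependence — is short once the two Bessel identities are quoted with their exact normalisations. The work lies in justifying the successive interchanges of the three integrations, in checking integrability of the kernels (and positivity of $\Theta$ for positive arguments) so that Tonelli is legitimate, and in confirming that the final Laplace inversion applies, i.e. that the bracketed expression is genuinely the Laplace transform of a function of $t$ agreeing with $p_t(w)$ for a.e.\ $t$ and hence, by continuity, for the fixed $t$ of the statement.
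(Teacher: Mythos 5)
Your proposal is correct and follows essentially the same route as the paper's proof: cancel $\lambda$ against the exponential-time mixture, rewrite $F_v(x,w)=I_v(x\vee w)K_v(x\wedge w)$ via the product identity (2.7) of Matsumoto--Yor, insert the Hartman--Watson representation $I_v(r)=\int_0^\infty e^{-v^2 s/2}\Theta(r,s)\,ds$ so that $v^2/2=\lambda+1/8$ splits off the factor $e^{-\lambda s}$, and conclude by Fubini/Tonelli and uniqueness of the Laplace transform. The only cosmetic difference is your observation that the product identity is the $\alpha=1/2$ case of the Bessel resolvent formula already used in Lemma~\ref{expdens}, which the paper instead quotes directly.
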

\begin{proof}
 We have on $(0,\infty)$
\begin{align}\label{RHS}
	\mathbb{P}(\theta_{T_{\lambda}}\in dw) = \lambda\int_0^{\infty}e^{-\lambda t}\mathbb{P}(\theta_{t}\in dw)dt.\,
\end{align}
where $T_{\lambda}$ is  an exponential random variable with parameter $\lambda$ independent of the process $\theta$. On the other side, from Lemma \ref{expdens} for $\lambda >0$ and $v=\sqrt{2\lambda+1/4}$, we have on $(0,\infty)$
\begin{align*}
	\mathbb{P}&(\theta_{T_{\lambda}}\in dw) = 2\lambda e^{x-w}\sqrt{\frac{x}{w^3}}F_{v}(x,w)dw\\
	&= 2\lambda e^{x-w}\sqrt{\frac{x}{w^3}}I_{v}(x\vee w)K_{v}(x\wedge w)dw\\
	&= 2\lambda e^{x-w}\sqrt{\frac{x}{w^3}}\frac12\int_0^{\infty}e^{-\frac{z}{2}-\frac{(x\vee w)^2+(x\wedge w)^2}{2z}}I_v((x\vee w)(x\wedge w)/z)\frac{dz}{z}dw\\
	&= \lambda e^{x-w}\sqrt{\frac{x}{w^3}}\int_0^{\infty}e^{-\frac{z}{2}-\frac{x^2+w^2}{2z}}I_v(xw/z)\frac{dz}{z}dw,
\end{align*}
where in the third equality we used the identity
$$
I_{v}(x)K_{v}(w)	= \frac12\int_0^{\infty}e^{-\frac{z}{2}-\frac{x^2+w^2}{2z}}I_v(xw/z)\frac{dz}{z}
$$
for $w\geq x>0$ (see (2.7) in \cite{MatII}). To continue we use another identity for modified Bessel functions (see (2.10) in \cite{MatII})
\begin{align*}
	I_v(r) = \int_0^{\infty}e^{-\frac{v^2}{2}t}\Theta(r,t)dt, \ r>0
\end{align*}
and obtain
\begin{align*}
 \lambda &e^{x-w}\sqrt{\frac{x}{w^3}}\int_0^{\infty}e^{-\frac{z}{2}-\frac{x^2+w^2}{2z}}I_v(xw/z)\frac{dz}{z}\\
 &= \lambda e^{x-w}\sqrt{\frac{x}{w^3}}\int_0^{\infty}\frac{1}{z}e^{-\frac{z}{2}-\frac{x^2+w^2}{2z}}\int_0^{\infty}e^{-\frac{v^2}{2}t}\Theta(xw/z,t)dt\\
 &= \lambda e^{x-w}\sqrt{\frac{x}{w^3}}\int_0^{\infty}\int_0^{\infty}e^{-\lambda t -\frac{t}{8}}\Theta(xw/z,t)\frac{1}{z}e^{-\frac{z}{2}-\frac{x^2+w^2}{2z}}dzdt,
\end{align*}
where in the last equality we used Fubini's theorem and that $v^2 = 2\lambda + 1/4$. To finish the proof we compare the last expression with (\ref{RHS}).
\end{proof}
Now we consider a family of processes defined by \eqref{1/7} for all $x>0$, and to underline the dependence on $x$ we denote this process by $\theta(x)$, i.e.
\begin{align*}
	\theta_t(x) = \frac{xe^{B_t - \frac{t}{2}}}{1+x\int_0^te^{B_u - \frac{u}{2}}du}.
\end{align*}
From Lemma \ref{expdens} we can deduce
\begin{proposition} Let $T_{\lambda}, \hat{T}_2$ be two independent exponential random variables, which are independent of the Brownian motion $B$. Then on $(0,\infty)$
\begin{equation}
	z^2\mathbb{P}\Big(\theta_{T_{\lambda}}(\hat{T}_2)\in dz\Big) = \mathbb{P}(\hat{T}_2\in dz)\mathbb{E}(\theta_{T_{\lambda}}(z))^2.
\end{equation}
\end{proposition}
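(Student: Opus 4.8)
The plan is to condition on the value of $\hat{T}_2$ and reduce everything to the one-dimensional density supplied by Lemma \ref{expdens}. Write $v = \sqrt{2\lambda + 1/4}$ and abbreviate the density of $\theta_{T_\lambda}(x)$ from Lemma \ref{expdens} as
$$ g(x,z) = 2\lambda e^{x-z}\sqrt{x/z^3}\,F_v(x,z), \qquad F_v(x,z) = I_v(x\vee z)K_v(x\wedge z). $$
Since $\hat{T}_2$ is exponential with parameter $2$ and independent of $(B, T_\lambda)$, the conditional law of $\theta_{T_\lambda}(\hat{T}_2)$ given $\hat{T}_2 = x$ is exactly the law of $\theta_{T_\lambda}(x)$ for that fixed $x$. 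First I would integrate $g(x,z)$ against the density $2e^{-2x}$ of $\hat{T}_2$; the integrand is non-negative, so Tonelli's theorem legitimises the interchange and gives the density of $\theta_{T_\lambda}(\hat{T}_2)$ at $z$ as $\int_0^\infty g(x,z)\,2e^{-2x}\,dx$.

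The next step is to evaluate the two sides of the claimed identity in this form. On the left, multiplying the above density by $z^2$ and using $e^{x}\cdot e^{-2x}=e^{-x}$ collapses the prefactors, leaving a constant times $e^{-z} z^{1/2}\int_0^\infty \sqrt{x}\,e^{-x}F_v(x,z)\,dx$. On the right, I would compute the second moment $\mathbb{E}(\theta_{T_\lambda}(z))^2 = \int_0^\infty w^2 g(z,w)\,dw$ directly from Lemma \ref{expdens}; the factor $w^2 \cdot w^{-3/2}=w^{1/2}$ produces a constant times $e^{z}z^{1/2}\int_0^\infty \sqrt{w}\,e^{-w}F_v(z,w)\,dw$, and multiplying by the density $2e^{-2z}$ of $\hat{T}_2$ again collapses $e^{z}\cdot e^{-2z}=e^{-z}$. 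The two sides thus reduce to the same prefactor times the integrals $\int_0^\infty \sqrt{x}\,e^{-x}F_v(x,z)\,dx$ and $\int_0^\infty \sqrt{w}\,e^{-w}F_v(z,w)\,dw$ respectively.

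The identity then follows from the symmetry $F_v(x,z) = I_v(x\vee z)K_v(x\wedge z) = F_v(z,x)$, under which the two integrals are literally the same after renaming the dummy variable. I expect the only genuinely delicate point to be the bookkeeping: one must track the powers of $z$ and the exponential factors carefully, and the cancellations $e^{x}e^{-2x}=e^{-x}$ and $e^{z}e^{-2z}=e^{-z}$ are what make the weights in the two integrals symmetric in their arguments. This is precisely where the parameter of $\hat{T}_2$ being equal to $2$ is used, since for any other parameter the weight $e^{x-z}$ coming from Lemma \ref{expdens} would fail to combine into a symmetric kernel and the identity would break. Once this symmetry is in place the proof is complete.
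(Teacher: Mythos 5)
Your proof is correct and takes essentially the same route as the paper's: both integrate the density $2\lambda e^{x-z}\sqrt{x/z^{3}}\,F_v(x,z)$ from Lemma \ref{expdens} against the law of $\hat{T}_2$ and reduce the identity to the symmetry of the kernel $e^{-x-z}\sqrt{xz}\,F_v(x,z)$, i.e., of $F_v$. Your explicit bookkeeping of the factor $2$ in the exponential density and your remark that the parameter $2$ is exactly what makes the kernel symmetric are accurate (the paper leaves both implicit, letting the factor $2$ cancel on the two sides).
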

\begin{proof}
From Lemma \ref{expdens} for $x>0$, we have on $(0,\infty)$
\begin{align*}
		e^{-2x}z^2\mathbb{P}(\theta_{T_{\lambda}}(x)\in dz) = 2\lambda e^{-x-z}\sqrt{xz}F_{v}(x,z)dz.
\end{align*}
Thus
\begin{align*}
		e^{-2x}\mathbb{E}(\theta_{T_{\lambda}}(x))^2 = \int_0^{\infty}2\lambda e^{-x-z}\sqrt{xz}F_{v}(x,z)dz.
\end{align*}
From symmetry of $F_v$, after integrating on $x$, we obtain
\begin{align*}
	z^2\int_0^{\infty}e^{-2x}\mathbb{P}(\theta_{T_{\lambda}}(x)\in dz)dx &= \Big(\int_0^{\infty}2\lambda e^{-x-z}\sqrt{xz}F_{v}(x,z)dx\Big)dz\\
	&= e^{-2z}\mathbb{E}(\theta_{T_{\lambda}}(z))^2dz.
\end{align*}
The assertion follows.
\end{proof}
\bibliographystyle{plain}

\end{document}